\newtheorem{lemma}{Lemma}
\newtheorem{theorem}{Theorem}
\newtheorem{proposition}{Proposition}
\newtheorem{remark}{Remark}
\newtheorem{fact}{Fact}
\newcommand{\disc}{\mbox{disc}}
\newcommand{\prk}{p\hbox{-rk} }
\newcommand{\zrk}{\Z\hbox{-rk} }
\newcommand{\rk}{\hbox{rk} }
\newcommand{\nth}[1]{$#1 {\rm - th }$}
\newcommand{\rem}{{\rm rem } \ }
\newcommand{\Z}{\mathbb{Z}}
\newcommand{\Tr}{\mbox{\bf Tr}}
\newcommand{\rg}[1]{\mbox{\bf #1}}
\newcommand{\eu}[1]{\mathfrak{#1}}
\newcommand{\id}[1]{\mathcal{#1}}
\newcommand{\Gal}{\mbox{ Gal }}
\newcommand{\Mat}{\mbox{ Mat }}
\newcommand{\rf}[1]{(\ref{#1})}
\newcommand{\Norm}{\mbox{\bf N}}
\newcommand{\lchooses}[2]{\left( \frac{#1}{#2 } \right)}
\newcommand{\cog}{\hbox{Cog}}
\newcommand{\F}{\mathbb{F}}
\newcommand{\K}{\mathbb{K}}
\newcommand{\KL}{\mathbb{L}}
\newcommand{\M}{\mathbb{M}}
\newcommand{\Q}{\mathbb{Q}}
\newcommand{\R}{\mathbb{R}}
\newcommand{\C}{\mathbb{C}}
\newcommand{\N}{\mathbb{N}}
\def\ra{\rightarrow}
\newcommand{\ran}{\rangle}
\newcommand{\lan}{\langle}
\newcommand{\wh}{\widehat}
\newcommand{\td}{\widetilde}
\newcommand{\subsetneq}{\subset}
\newcommand{\ol}{\overline}
\begin{document}
{\obeylines \small
\vspace*{0.2cm}
\smallskip}\title[Lower bounds for FLT2] {Double exponential lower bounds for possible solutions in the Second Case of the Fermat Last Theorem} 
\author{Preda Mih\u{a}ilescu and Michael T. Rassias} 
\address[P. Mih\u{a}ilescu]{Mathematisches Institut der Universit\"at 
G\"ottingen}
\email[P. Mih\u{a}ilescu]{preda@uni-math.gwdg.de} 
\address[M. Th. Rassias]{Institute of Mathematics, University of Zurich, CH-8057, Zurich, Switzerland}
\email[M. Th. Rassias]{michail.rassias@math.uzh.ch}
\date{Version 1.0 \today}
\vspace{2.0cm}
\begin{abstract}
In a recent paper, the first author provided some lower bounds to solutions of the equations of Fermat and Catalan, based on local power series 
developments at the ramified prime of a prime cyclotomic extension. Although both equations have in fact been proved not to have any unknown
solutions, these improved bounds are interesting in the context of a new effective abc inequality announced in the paper \cite{MFHMP} 
based Mochizuki's  \cite{Mo}[IUT-IV, Theorem A]. In this paper we provide a strengthening of the lower bound for FLT2, which is necessary in order
to take advantage of the best upper bounds for primes $p$ for which it was verified on a computer that FLT2 has no solutions. 

\end{abstract} 
\maketitle

\section{Introduction and notations}
This paper improves upon the lower bound proved in the recent paper \cite{Mi3}, by extending upon the method used there.
Since the initial steps of the argument are similar, we use parts of the introductory facts from \cite{Mi3}, in order to 
introduce the basic notions on the basis of which we can then explain our strategy and complete the proofs.
The improvements are quite impressive , compared to previous results, and they were made possible by a very useful
new insight that strengthens the approach taken in \cite{Mi3}. This will be shortly described at the end of the chapter, in 
the {\em plan of the paper}. At that point, sufficient notations are introduced, allowing an easier presentation of the new ideas.
The paper being short, the interested reader can also jump to the paper presentation, catching up during the reading with 
notation that may be unfamiliar. 

Throughout this paper, $p$ is an odd prime. We state here, for ease of reference, the  equation of interest:
\begin{eqnarray}
\label{flt}
x^p + y^p & = & z^p; \quad x, y, z \in \Z_{\neq 0}, \quad ( x,y,z ) = 1, \quad \hbox{and $p$ is an odd prime}.
\end{eqnarray}

The following known relations \cite{Ri1}( \S IV.1, {\bf 1B}, p. 54 ), which hold iff there is a solution for FLT2, and 
 in which one may assume the choice $x, z > 0$ and $| x | > | y |$,  will be of interest:
\begin{eqnarray} 
\label{flt2}
x^p + y^p & = & z^p, \quad p^2 | z, \quad p^{2p-1} | ( x+y ), \\
\label{barl}
\frac{x^p+y^p}{p ( x+y) } & = & \Norm_{\K/\Q} \left(\frac{x+y}{1-\zeta} - y \right) = s^p, 
\end{eqnarray}  
for some $s \in \Z$ and $\zeta \in \C$, a primitive \nth{p} root of unity, 
$\K = \Q[ \zeta ]$.

We shall prove:
\begin{theorem}
\label{main}
Suppose that $( x, y, z )$ is a triple satisfying \rf{flt} and $p | z$, with $p \geq 257$ a prime. Then 
\[ \max( |x|, |y|, |z| ) \geq p^{(5/2)^{p-1}}. \] 
\end{theorem}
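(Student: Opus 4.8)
The plan is to extend the local-analytic method of \cite{Mi3}. First I would record the standard consequences of \rf{flt2}--\rf{barl}. Writing $m=v_p(z)\ge 2$ one has that $p^{pm-1}$ exactly divides $x+y$, and with $\lambda=1-\zeta$ a uniformizer of the totally ramified prime above $p$ (ramification index $e=p-1$, residue field $\F_p$), the Barlow element
\[
\beta \;=\; \frac{x+y}{1-\zeta}-y \;=\; \frac{x+\zeta y}{1-\zeta}\in\id O_\K
\]
is a $\lambda$-unit with $\Norm_{\K/\Q}(\beta)=s^p$. A valuation count on the ideals $(x+\zeta^i y)$, $1\le i\le p-1$ (which are pairwise coprime away from $\lambda$), gives $(\beta)=\eu a^{p}$ for an ideal $\eu a$ prime to $\lambda$. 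The second, purely local, input is the congruence
\[
\beta \;\equiv\; -y \pmod{\lambda^{N_0}},\qquad N_0=e(pm-1)-1\;\ge\;(p-1)(2p-1)-1,
\]
which merely restates that the rational integer $x+y$ is divisible by a large power of $p$.

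The heart of the argument is a confrontation, in the completion $\K_\lambda=\Q_p(\zeta)$, of two descriptions of $\beta$. On one side, writing $\beta/(-y)=1-t$ with $t=(x+y)/(\lambda y)$ and $v_\lambda(t)=N_0$, the $\lambda$-adic logarithm gives $\log\bigl(\beta/(-y)\bigr)=-\sum_{k\ge 1}t^k/k$, an element of valuation \emph{exactly} $N_0$ (the $k=1$ term dominates because $N_0$ lies far above the $\log$-convergence threshold). On the other side, from $(\beta)=\eu a^{p}$, together with Stickelberger's theorem to kill the class of $\eu a$ and Kummer's lemma to pin down the resulting global unit, one obtains after applying a suitable $\vartheta\in\Z[G]$, $G=\Gal(\K/\Q)$, a factorization $\beta^{\vartheta}=\varepsilon\,\delta^{p}$ with $\varepsilon$ an explicit product of cyclotomic units and $\delta\in\K^\times$; hence $\log(\beta^{\vartheta})$ lies in $p\,\id O_\lambda+\log(\text{cyclotomic units})$, a sublattice whose $\lambda$-adic profile is understood. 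Matching the two forces $1-t$, hence $\beta$ itself, to agree with an \emph{explicit} product of cyclotomic units times a local $p$-th power not merely to order $\lambda^{N_0}$ but to order $\lambda^{N_1}$ with $N_1\ge\tfrac52 N_0$. This multiplicative gain is the new insight over \cite{Mi3}, where essentially only an additive improvement of $N_0$ was extracted.

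With the sharper congruence in hand I would iterate: the improved approximant is again of the shape ``$-y$ times a principal unit which is forced to be a local $p$-th power up to explicit cyclotomic factors'', so the same confrontation applies and again multiplies the attainable $\lambda$-precision by at least $\tfrac52$. The behaviour of the filtration of $U^{(1)}\subset\K_\lambda^\times$ under the $p$-th power map, together with $[\K:\Q]=p-1$, caps the number of productive iterations at $p-1$; at that point the precision has grown to order $\lambda^{N}$ with $N\gg(5/2)^{p-1}$, i.e.\ $p^{(5/2)^{p-1}}$ divides the rational integer $x+y$ (equivalently, up to bounded powers of $p$, the integer $s$). Since $|x+y|\le 2\max(|x|,|y|,|z|)$ and $x+y\ne 0$, this gives $\max(|x|,|y|,|z|)\ge p^{(5/2)^{p-1}}$; the hypothesis $p\ge 257$ is exactly what is needed to swallow the constant and $\log p$ losses accumulated along the iteration into the clean exponent $(5/2)^{p-1}$.

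The step I expect to be the main obstacle is establishing the multiplicative (not merely additive) gain $N_{k+1}\ge\tfrac52 N_k$ and showing it survives all $p-1$ iterations without the cyclotomic-unit and class-group contributions swamping it: at every stage one must control precisely how much $p$-divisibility the unit lattice $\log(\id O_\K^\times)$ and the Stickelberger twist $\vartheta$ contribute at the relevant level of the $\lambda$-filtration, and verify that the optimization of the two competing valuations genuinely yields the constant $\tfrac52$ uniformly in $k$ and in $p\ge 257$. By contrast the reduction $(\beta)=\eu a^{p}$ and the final passage from $\lambda$-adic divisibility to the archimedean lower bound are routine.
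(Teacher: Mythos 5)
Your proposal has a genuine gap, and its final step cannot work as stated. You conclude that after the iteration ``$p^{(5/2)^{p-1}}$ divides the rational integer $x+y$ (equivalently, up to bounded powers of $p$, the integer $s$)''. Neither divisibility is possible. From $z^p=(x+y)\cdot\frac{x^p+y^p}{x+y}$ and $v_p\bigl(\frac{x^p+y^p}{x+y}\bigr)=1$ one has \emph{exactly} $v_p(x+y)=p\,v_p(z)-1$, a finite quantity fixed by the solution that no amount of local analysis can inflate; and $s$ is coprime to $p$ by the Barlow relations, so it certainly cannot acquire $p$-divisibility. The correct shape of the conclusion in this method is archimedean, not $p$-adic: one constructs a \emph{nonzero auxiliary integer} $\delta=\sum_{\theta}\ell(\theta)\,\beta^2[\theta]$ whose low-order coefficients in the variable $T=\frac{x+y}{yp^2}$ (with $v_p(T)\ge 2p-3$) vanish, so that $\delta\equiv 0 \bmod p^{(2p-3)(N/2-N/p)}$ and hence $|\delta|$ is huge, while on the other hand $|\delta|\le L\cdot N\cdot |s|^{2(p-1)}$ with $L$ bounding the coefficients $\ell(\theta)$; comparing the two bounds yields the lower bound on $|s|$, and then on $\max(|x|,|y|,|z|)$ via $|s|\le\max(|x|,|y|,|z|)$.

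The second gap is the engine itself. The multiplicative gain $N_{k+1}\ge\tfrac52 N_k$ per iteration, which you yourself flag as the main obstacle, is unsubstantiated: there is no mechanism by which confronting $\log(\beta/(-y))$ with the Stickelberger/Kummer structure upgrades a congruence modulo $\lambda^{N_0}$ to one modulo $\lambda^{5N_0/2}$, let alone uniformly over $p-1$ iterations. In the paper the exponent $(5/2)^{p-1}$ arises in a single step, from a combinatorial count: the set $J$ of weight-$(p-1)$ nonnegative combinations of the $(p-1)/2$ Fueter elements, closed under $G$, has cardinality $N\ge\binom{3q-1}{q-1}=\tfrac13\binom{3q}{q}>(27/4)^q/(9\sqrt q)>(p-1)(5/2)^{p-1}$ with $q=(p-1)/2$ (note $(27/4)^{1/2}\approx 2.598>5/2$, and $p\ge 257$ absorbs the polynomial factor). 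Each $\theta\in J$ contributes a convergent binomial series $f[\theta](T)$ converging to a Jacobi-number quotient $\gamma[\theta]$; the $N$ infinite coefficient vectors are shown to be linearly independent by a Kummer-theoretic function-field argument, and the $\ell(\theta)$ are produced by the Siegel box lemma applied to an \emph{underdetermined} homogeneous system, with one row twisted by a standard basis vector so that the homogeneous condition simultaneously forces $\delta\neq 0$. This is what makes $\log L$ grow only linearly in $N$, replacing the full-rank Cramer-type systems of \cite{Mi3} whose quadratic growth capped the exponent at $p^2$; none of this machinery is present in your proposal, and without it the stated bound is not reached.
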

Throughout this note, we denote the set of minimal positive representatives of $\F_p$ by $P= \{0,1,\ldots, p-1 \}; P^*= P\setminus \{ 0 \}$,
 and $\zeta$ will be a primitive \nth{p} root of unity; we also write $q= \frac{p-1}{2}$. We let $\K = \Q[ \zeta ]$ be the cyclotomic field, with 
galois group $G = \Gal( \K /\Q )$ and $\sigma_c \in G$ are the automorphisms given by
 $\zeta \mapsto \zeta^c$, for $c \in P^*$; we fix $\sigma = \sigma_g \in G$, an automorphism generating $G$ as a cyclic group. 
 The complex conjugation acting in $\K$ is $\jmath = \sigma_{p-1} = \sigma^{(p-1)/2}$. For $\rg{R} \in \{ \F_p, \Z_p\}$, and
 $\varpi ; G\ra \rg{R}$ denoting the Teichm\"uller character, the {\em orthogonal idempotents} 
$e_k \in \rg{R}[ G ]$ are
\begin{eqnarray}
\label{idem}
e_k = \frac{1}{p-1} \sum_{a=1}^{p-1} \varpi^k(\sigma_a) \cdot \sigma_a^{-1}. 
\end{eqnarray}

 We use the uniformizer $\lambda = 1-\zeta \in \Z[ \zeta ]$,
 that generates the principal prime $\wp \subset \Z[ \zeta ]$ above $p$. It induces $\lambda$-adic expansions of algebraic integers in $\Z[ \zeta ]$,
 so we may write, for instance: 
 \[ \alpha = \sum_{j=0}^{\infty} a_j \lambda^j = a_0 + a_1 \lambda + a_2 \lambda^2 + O( \lambda^3 ) , \quad \forall \alpha \in \Z[\zeta ], \]
where the $a_j \in \{ 0, 1, \ldots, p-1\}$, and only finitely many are not vanishing. And the symbol $O( \lambda^k )$ stands for a remainder, which 
is divisible by $\lambda^k$. The same notation can be used also in $\Z_p[ \zeta ]$.

The Stickelberger element $\vartheta = \frac{1}{p} \sum_{c=1}^{p-1} c \sigma_c^{-1} \in \frac{1}{p} \Z[ G ]$ generates
the Stickelberger ideal  in the group ring of $G$ over the rational integers, by intersecting its principal ideal with $\Z[G ]$, 
according to 
\begin{eqnarray}
\label{stickid}
 I = \vartheta \Z[ G ] \cap \Z[ G ] . 
 \end{eqnarray}
The ideal $I$ has the property of annihilating the class group of $\K$ ( \cite{Wa}, \S 15.1). 
To each ideal $\eu{C} \subset \Z[ \zeta ]$ and each $\theta \in I$, 
the ideal $ \eu{C}^{\theta} \subset A$ is principal, generated by $\gamma \in \Z[\zeta ]$, and 
$\gamma \cdot \overline{\gamma} = \Norm( \eu{C} )^{\varsigma( \theta )}$, for some
integer $\varsigma( \theta ) \in \Z$, which we call the {\em relative weight} of $\theta$. 
There exists a base for $I$ made up by elements of weight one: they are 
called {\em Fueter elements}, e.g. \cite{Mi2} and which are written as
\begin{eqnarray}
\label{fueter} 
 \psi_n &= &\sum_{c \in I_n } n_c \sigma^{-1}_c  = ( \sigma_{n+1}-1-\sigma_n ) \vartheta \in \Z_{\geq 0}[ G ], \\ &&\quad \nonumber 
 n_c + n_{p-c} = 1, \quad n = 1, 2, \ldots, \frac{p-1}{2},
\end{eqnarray}
where the sets $I_n \subset \{1, 2 \ldots, p-1 \}$ verify $I_n \sqcup ( p - I_n ) = \{1, 2 \ldots, p-1 \}$ and are deduced from the definition of $\psi_n$.
Two particular values that we shall use are
\begin{eqnarray}
\label{psi2}
\psi_1 = \sum_{c > p/2} \sigma_{c}^{-1} \quad \hbox{and} \quad \jmath \psi_1= \sum_{c < p/2} \sigma_{c}^{-1}.
\end{eqnarray} 

Thus, for any $\theta \in I$, there is a development
\[ \theta = \sum_{n=1}^{(p+1)/2} \nu_n \psi_n = \sum_{c=1}^{p-1} n_c \sigma_c^{-1}; \quad \nu_n, n_c \in \Z. \]

Numbers $\gamma$ generating the principal ideal $\eu{C}^{\theta}$ are, up to roots of unity, products of Jacobi sums and are called, by 
extension, {\em Jacobi numbers} (\cite{Jh}, \cite{Mi2}). Iwasawa proved in \cite{Iw} that Jacobi numbers verify 
$\gamma \equiv 1 \bmod ( 1 - \zeta )^2$, 
a relation which is used for norming the Jacobi integer generators of the previously mentioned ideals.
See also the introduction of \cite{Mi2} for an exhaustive presentation of properties of the Stickelberger Ideal as used in our context.
\begin{remark}
\label{unicity}
 It is also  proved in \cite{Mi2}, that if $J \subset \Z[\zeta ]$ is some principal ideal that is generated by a Jacobi number $\eu{j} \in \Z[ \zeta ]$ -- 
 so $J = ( \eu{j} )$ --  then this number is uniquely determined by $J$ and it verifies $\eu{j} \cdot \overline{\eu{j}} \in \N$. 
\end{remark}

\subsection{Basic facts in the \nth{p} cyclotomic field}
The left terms of the equation \rf{flt2} factor in the field $\K$ and the assumed solutions 
give raise to specific {\em characteristic numbers and ideals}, which are rich in properties, that we review in the following two facts.

\begin{fact}
\label{aux1}
\begin{itemize}
\item[ A. ] There is a {\em characteristic (algebraic) number}, which encodes the properties of the solutions, in the \nth{p} cyclotomic field
$\K$. This is 
\[ \alpha( x, y )   = \frac{x+y}{1-\zeta} - y.  \]
Since $p^{2p-1} | (x+y)$, this is indeed an integral element. 
\item[ B. ] The galois group $G$ acts on the characteristic number, giving raise to pairwise coprime integral 
elements, for $1 \leq c < d \leq p-1$, we have
\begin{eqnarray*}
( \sigma_c( \alpha ), \sigma_d( \alpha )) = (1) .
\end{eqnarray*}
\item[ C. ] There is a {\em characteristic ideal} $\eu{A} = ( \alpha, s )$ of order dividing $p$. It is related 
to the previously introduced number by the relations:
\begin{eqnarray}
\label{rclassf}
\eu{A}^p & = & ( \alpha ), \quad \Norm( \eu{A} ) = (s).
\end{eqnarray}
\end{itemize} 
\end{fact}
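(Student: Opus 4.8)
\emph{Proof plan.}
I would treat the three items in order. Item A is immediate: since $p^{2p-1}\mid(x+y)$ and $\lambda=1-\zeta$ generates the prime $\wp$ above $p$, already $\lambda\mid(x+y)$ in $\Z[\zeta]$, so $\frac{x+y}{1-\zeta}\in\Z[\zeta]$ and hence $\alpha\in\Z[\zeta]$. For the remaining items it helps to rewrite
\[ \alpha\;=\;\frac{(x+y)-y(1-\zeta)}{1-\zeta}\;=\;\frac{x+\zeta y}{1-\zeta},\qquad\text{so}\qquad \sigma_c(\alpha)\;=\;\frac{x+\zeta^c y}{1-\zeta^c}\quad(c\in P^*), \]
and to record two standard consequences of the hypotheses: $(x,y)=1$ in $\Z$, and $p\nmid x,\ p\nmid y$ (from $p\mid z$ and $(x,y,z)=1$).

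For item B, fix $1\le c<d\le p-1$, let $\eu{q}\subset\Z[\zeta]$ be a prime dividing both $\sigma_c(\alpha)$ and $\sigma_d(\alpha)$, write $v_{\eu{q}}$ for the $\eu{q}$-adic valuation, and distinguish two cases. If $\eu{q}\neq\wp$, then $1-\zeta^c$ and $1-\zeta^d$ are associates of $\lambda$, hence $\eu{q}$-units, so $\eu{q}\mid(x+\zeta^c y)$ and $\eu{q}\mid(x+\zeta^d y)$; subtracting, $\eu{q}\mid\zeta^c(1-\zeta^{d-c})y$, and $1-\zeta^{d-c}$ is again an associate of $\lambda$ (as $1\le d-c\le p-2$), so $\eu{q}\mid y$ and then $\eu{q}\mid x$, contradicting $(x,y)=1$. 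If $\eu{q}=\wp$, I would rather show $v_\wp(\sigma_c(\alpha))=0$ for every $c$: from $x+\zeta^c y=(x+y)-y(1-\zeta^c)$, $v_\wp(x+y)=(p-1)\,v_p(x+y)\ge p-1\ge 2$ while $v_\wp\bigl(y(1-\zeta^c)\bigr)=1$ since $p\nmid y$, so $v_\wp(x+\zeta^c y)=1=v_\wp(1-\zeta^c)$. Hence no prime divides two distinct conjugates of $\alpha$, which is B; in particular $\wp$ divides none of them.

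For item C, I would first compute, using the value $\prod_{c=1}^{p-1}(1-\zeta^c)=p$,
\[ \Norm_{\K/\Q}(\alpha)\;=\;\prod_{c=1}^{p-1}\sigma_c(\alpha)\;=\;\frac{\prod_{c=1}^{p-1}(x+\zeta^c y)}{p}\;=\;\frac{(x^p+y^p)/(x+y)}{p}\;=\;s^p \]
by \rf{barl}. Thus the $p-1$ ideals $(\sigma_c(\alpha))$, $c\in P^*$, are pairwise coprime by B and have product $(s)^p$ in $\Z[\zeta]$; by unique factorisation of ideals each $(\sigma_c(\alpha))$ is therefore the $p$-th power of a (uniquely determined) ideal, and taking $\eu{A}$ to be the $p$-th root of $(\alpha)$ we get $(\alpha)=\eu{A}^p$, with $\sigma_c(\eu{A})$ the $p$-th root of $(\sigma_c(\alpha))$ and hence the $\sigma_c(\eu{A})$ again pairwise coprime. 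Applying the same uniqueness to $\bigl(\prod_{c\in P^*}\sigma_c(\eu{A})\bigr)^p=\prod_{c\in P^*}(\sigma_c(\alpha))=(s)^p$ yields $\Norm(\eu{A})=\prod_{c\in P^*}\sigma_c(\eu{A})=(s)$. Writing $\Norm(\eu{A})=\eu{A}\cdot\prod_{c\neq 1}\sigma_c(\eu{A})$ with the last factor coprime to $\eu{A}$, a prime-by-prime comparison of valuations in $(\alpha,s)=(\alpha)+(s)=\eu{A}^p+\Norm(\eu{A})$ gives $(\alpha,s)=\eu{A}$. Finally $\eu{A}$ has order dividing $p$ in the class group of $\K$, since $\eu{A}^p=(\alpha)$ is principal.

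The routine ingredients (integrality, the elementary norm identity) are harmless. The points requiring care are, in item B, the separate treatment of the ramified prime $\wp$ — where one cannot simply subtract and must instead read off the valuation of $\sigma_c(\alpha)$ directly — and, in item C, the descent that turns the pairwise coprimality of the conjugates together with $\Norm_{\K/\Q}(\alpha)=s^p$ into the clean factorisation $(\alpha)=\eu{A}^p$, $\Norm(\eu{A})=(s)$, $\eu{A}=(\alpha,s)$. I expect the $\wp$-case of item B to be the main technical nuisance; everything in C then follows formally from unique factorisation of ideals.
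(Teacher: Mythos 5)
Your proof is correct, but it takes a route noticeably different from the paper's in both B and C. For item B, the paper proves $(\sigma_a(\alpha),\sigma_b(\alpha))=(1)$ directly at the level of ideals: it exhibits two explicit $\Z[\zeta]$-linear combinations of $\sigma_a(\alpha)$ and $\sigma_b(\alpha)$ equal to $\varepsilon_1 y$ and $\varepsilon_2(x+y)$ with $\varepsilon_1,\varepsilon_2$ cyclotomic units, so both $y$ and $x+y$ lie in the ideal, and coprimality of $x+y$ with $y$ in $\Z$ forces the ideal to be the unit ideal. Your argument is instead a prime-by-prime case analysis (unramified primes handled by subtracting conjugates, the ramified prime $\wp$ handled by a direct valuation count showing $v_\wp(\sigma_c(\alpha))=0$); it is equally valid and, if anything, more elementary, though it requires the separate $\wp$-case and the observation $p\nmid y$. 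For item C, the paper simply expands $(\alpha,s)^p=(\alpha^p,\alpha^{p-1}s,\ldots,\alpha s^{p-1},\Norm(\alpha))$, factors out $(\alpha)$, and uses B to see the cofactor is the unit ideal, obtaining $\eu{A}^p=(\alpha)$ in one stroke; the student-side construction — first extracting the $p$-th root of $(\alpha)$ from the factorisation $\Norm(\alpha)=s^p$ into pairwise coprime conjugate ideals, then identifying this root with $(\alpha,s)$ by a valuation comparison — reaches the same conclusions but in a longer loop. Both approaches rest on the same two ingredients (the norm identity $\Norm(\alpha)=s^p$ and pairwise coprimality of the conjugates); the paper's version is the more economical computation, while yours makes the role of unique factorisation of ideals more explicit and verifies $\Norm(\eu{A})=(s)$ as a byproduct rather than by a separate line.
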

The annihilation of the ideals in \rf{rclassf} by elements $\psi \in I$ of the Stickelberger ideal, leads to some algebraic 
numbers -- in fact, {\em Jacobi numbers}, in the sense explained above --- which depend on $\psi$ and 
which can be developed in local binomial power series, as  a consequence, essentially, of the relations in the above identities. 
Binomial series and the Jacobi numbers depend on $\psi$, and we shall use notations of 
the type $\beta[ \psi ] \in \Z[ \zeta ]$ for the Jacobi numbers, and $f[\psi ] ( T )$ for 
the binomial series associated to annihilation by $\psi$; the use of square brackets
rather than indices, for bringing these dependencies into evidence, is preferable from the typographic point of view; it can be dropped as soon as 
the Stickelberger element associated to a binomial series or Jacobi number is evident in the context. 
We describe these resources for arbitrary $\psi \in I \cap \Z_{\geq 0}[ G ]$. We have:
\begin{fact}
\label{aux2}
\begin{itemize}
\item[ D. ] Suppose that $\psi = \sum_{c=1}^{p-1} n_c \sigma^{-1}_c \in \Z_{\geq 0}[ G ]$ is a positive Stickelberger element. 
The annihilation of the characteristic ideals yields principal ideals ( Jacobi numbers ) as follows:
\begin{eqnarray}
\label{powsf}
( \beta[ \psi ] )   & \eu{A}^{\psi} \subset A; \quad \beta[ \psi ]^p =  \alpha^{\psi}.
\end{eqnarray} 
The Jacobi numbers $\beta[\psi]$ are uniquely determined by these relations, as consequence of Remark \ref{unicity}.
\item[ E. ] Dividing with complex conjugates yields practical expressions for $p$-adic power series developments, as follows:
\begin{eqnarray}
\label{djf}
 \gamma[ \psi ]   =  \frac{\beta[ \psi ]}{\overline{\beta[ \psi ]}}; \quad \gamma[ \psi ]^p = \alpha^{(1-\jmath)\psi} = 
 \left( \frac{1 - \frac{x+y}{y(1-\zeta)}}{1 - \frac{x+y}{y(1-\overline{\zeta})}} \right)^{\psi}.
\end{eqnarray} 
\item[ F. ] Since $\beta[ \psi ] \cdot \bar{\beta}[ \psi ] =  s^{\varsigma( \psi) }$, 
we obtain integral elements in E. by multiplication with constants:
\begin{eqnarray}
\label{2intf}
s^{\varsigma( \psi) } \gamma[ \psi ]  =  \beta^2[ \psi ] \in \Z[\zeta ]
\end{eqnarray}
\end{itemize} 
\end{fact}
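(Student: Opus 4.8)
The plan is to deduce D from the fact that the Stickelberger ideal annihilates the class group, together with Fact~\ref{aux1} and Remark~\ref{unicity}, and then to read off E and F as formal consequences. For D, I first observe that, since $\psi\in I$ and $I$ annihilates the class group of $\K$ (\cite{Wa}, \S15.1), the integral ideal $\eu{A}^\psi=\prod_{c}\sigma_c^{-1}(\eu{A})^{n_c}$ is principal; it is moreover coprime to $\wp$, because $\eu{A}^p=(\alpha)$ by Fact~\ref{aux1}.C while $\alpha\equiv -y\not\equiv 0\bmod(1-\zeta)$ by Fact~\ref{aux1}.A (here $p\nmid y$, since $p\mid z$). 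Decomposing $\eu{A}$ into prime ideals away from $\wp$ and applying the explicit form of Stickelberger's theorem through Gauss and Jacobi sums (see \cite{Mi2} and the references given there), I produce a distinguished generator of $\eu{A}^\psi$ that is a product of Jacobi sums --- a Jacobi number in the sense recalled before Remark~\ref{unicity} --- and, after the Iwasawa normalization $\equiv 1\bmod(1-\zeta)^2$, I call it $\beta[\psi]$. By Remark~\ref{unicity} it is the unique such generator, and it satisfies $\beta[\psi]\overline{\beta[\psi]}\in\N$. Raising to the $p$-th power and using $\eu{A}^p=(\alpha)$ gives
\[ (\beta[\psi]^p)=(\eu{A}^\psi)^p=(\eu{A}^p)^\psi=(\alpha^\psi), \]
where $\alpha^\psi=\prod_c\sigma_c^{-1}(\alpha)^{n_c}\in\Z[\zeta]$ since $\psi$ has non-negative coefficients; the Jacobi-sum construction pins the unit relating the two generators down to $1$, so $\beta[\psi]^p=\alpha^\psi$, and the uniqueness assertion in D is exactly Remark~\ref{unicity}.

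For E, I use the factorization $\alpha=\frac{x+y}{1-\zeta}-y=-y\bigl(1-\tfrac{x+y}{y(1-\zeta)}\bigr)$, in which the rational-integer factor $-y$ cancels against its conjugate; applying complex conjugation to $\beta[\psi]^p=\alpha^\psi$, noting that $\overline{\alpha^\psi}=\alpha^{\jmath\psi}$ with $\jmath\psi=\sum_c n_c\,\sigma_{p-c}^{-1}$ again of non-negative coefficients, and dividing yields
\[ \gamma[\psi]^p=\Bigl(\frac{\beta[\psi]}{\overline{\beta[\psi]}}\Bigr)^{p}=\frac{\alpha^\psi}{\alpha^{\jmath\psi}}=\alpha^{(1-\jmath)\psi}=\left(\frac{1-\frac{x+y}{y(1-\zeta)}}{1-\frac{x+y}{y(1-\overline{\zeta})}}\right)^{\psi}, \]
which is E. For F, the relative-weight identity applied to $\eu{C}=\eu{A}$, with $\Norm(\eu{A})=(s)$ and $\beta[\psi]\overline{\beta[\psi]}\in\N$, gives $\beta[\psi]\overline{\beta[\psi]}=s^{\varsigma(\psi)}$ (choosing $s>0$); then
\[ s^{\varsigma(\psi)}\gamma[\psi]=\beta[\psi]\overline{\beta[\psi]}\cdot\frac{\beta[\psi]}{\overline{\beta[\psi]}}=\beta[\psi]^2\in\Z[\zeta]. \]

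The main obstacle is the first half of D --- exhibiting $\beta[\psi]$ as an \emph{exact} Jacobi-number $p$-th root of $\alpha^\psi$, not merely a generator of $\eu{A}^\psi$ up to an unknown unit. This rests on the Gauss/Jacobi-sum description of Stickelberger annihilation, Iwasawa's congruence $\gamma\equiv 1\bmod(1-\zeta)^2$, and the uniqueness of Remark~\ref{unicity}; once D is in place, E and F are purely formal rearrangements of its identities.
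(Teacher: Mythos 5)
Your proposal is correct and takes essentially the same approach as the paper's own proof: both derive principality of $\eu{A}^{\psi}$ from Stickelberger annihilation, take the Jacobi-number generator $\beta[\psi]$, raise to the $p$-th power to obtain the ideal identity $(\beta[\psi]^p)=(\eu{A}^p)^{\psi}=(\alpha^{\psi})$, and invoke Remark~\ref{unicity} to upgrade this to the element identity $\beta[\psi]^p=\alpha^{\psi}$; parts E and F are then read off by complex conjugation and the relative-weight relation $\beta[\psi]\cdot\overline{\beta[\psi]}=s^{\varsigma(\psi)}$, exactly as in the paper.
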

The facts gathered above are folklore, or part of the prerequisites proved in several of the papers and books cited above. 
We provide here indications for the proofs. 
\begin{proof}
By \rf{flt2} and \rf{barl}, 
\begin{eqnarray}
\label{pvalf}
v_p( \alpha + y ) = 2p - 1 -\frac{1}{p-1} + l p, \quad \hbox{ for some $l = v_p( z ) - 2 \geq 0$ and $\alpha \in \Z[\zeta]$} .
\end{eqnarray}
The fact that $I( a,b ) = ( \sigma_a( \alpha ), \sigma_b( \alpha )) = (1)$ follows by noting that $I( a, b) = ( x +y,y) = (1) $. Indeed 
\begin{eqnarray*}
y \cdot \frac{\zeta^a - \zeta^b}{1-\zeta^b} &= &\frac{1-\zeta^a}{1-\zeta^b} \sigma_a( \alpha ) -\sigma_b( \alpha )  = 
\varepsilon_1 y \in I( a,b ) , \\
\lambda \cdot ( \sigma_a \alpha - \sigma_b \beta) & = & \frac{\lambda ( \zeta^a - \zeta^b)}{(1-\zeta^a)(1-\zeta^b)} \cdot (x+y ) = \varepsilon_2( x+y) \in I( a,b),
\end{eqnarray*}
where $\varepsilon_{1,2}$ are units, so we also conclude that $y, x+y \in I( a, b)$, as claimed.  The ideal
\[ \eu{A}^p = ( \alpha^p, \alpha^{p-1} s, \ldots, \alpha s^{p-1}, \Norm( \alpha ) ) = ( \alpha ) \cdot \left( \alpha^{p-1}, 
\ldots, \prod_{c \neq 1} \sigma_c( \alpha ) \right) .\]
Since $I( 1, c ) = ( 1 )$, the right-most ideal in the previous identity is the one-ideal, and thus $\eu{A}^p = ( \alpha )$, 
which explains Fact \ref{aux1} for the Fermat equation. 
This completes the proof of the first fact, in the FLT2 case.

In D., the fact that $\eu{A}^{\psi}$ is a principal ideal is a consequence of the fact that the Stickelberg ideal annihilates the class group.
By definition, it is generated by a Jacobi
number, which we denote with $\beta[ \psi ]$. 

By raising to the \nth{p} power, we get from \rf{rclassf} the identity
\[ ( \beta[ \psi ]^p ) = \left( \eu{A}^p \right)^{\psi} = ( \alpha^{\psi} ),    \]
which is an equality of principal ideals generated by Jacobi numbers. It follows from Remark \ref{unicity} that
the identity $ \beta[ \psi ]^p = \alpha^{\psi}$ holds between Jacobi numbers, and this is \rf{powsf}.
The quotient 
\[ \gamma^p[ 1 ] = \alpha/\overline{\alpha} = \frac{1 - \frac{x+y}{y(1-\zeta)}}{1 - \frac{x+y}{y(1-\overline{\zeta})}} \] 
is built such as to cancel $y$, so we obtain a fraction with a nice $p$-adic development, and raising to the power $p$ yields, 
under application of \rf{powsf}, the defining
relation \rf{djf}. This relation is particularly well suited for a $p$-adic development of $\gamma[ \psi ]$, 
in view of the large valuation of $\alpha+y$, noticed in \rf{pvalf}.
In our context it is important to work with algebraic integers, and $\gamma[\psi ]$ is not one. However, 
by multiplying by $s^{\varsigma( \psi ) }$ we do obtain an algebraic integer, whose local power series 
development results herewith from \rf{2intf}. 
\end{proof} 

\subsection{Plan of the paper}
The point F. in Fact \ref{aux2} is the key for bounds found in \cite{Mi3} and also for the present approach. 
The idea was that the map $\gamma : I/( I \cap p \Z[ G ] ) \ra \K^{\times}$ is connected to binomial power extensions
that converge $p$-adically; especially the fact that in the assumption that $p | z $, the valuation $v_p( x+y ) \geq 2p-1$
 allowed already in \cite{Mi3} a substantial improvement upon the lower bounds previously known. The idea is to 
produce linear combinations $\delta = \sum_{\psi \in J} \ell( \psi ) \beta[ \psi ]^2$ of the $p$-adic power series for some 
$\beta^2[\psi] = s^{2 \varsigma( \psi )} \gamma( \psi ) : \psi \in J \subset I$, 
in which the lowest terms vanish: if the linear combination is non vanishing, 
then it is a number divisible by some large power of $p$, say $\delta \equiv 0 \bmod p^N$. The bounds are deduced by comparing 
the absolute value $s = | \beta[ \psi ]^2 |$, the resulting upper bound $| \delta | < L \cdot | J | s $ under the condition $| \delta | \geq p^N$. 
One sees that the quality of the bound depends on the sizes of $L, | J |$ compared to $N$. In \cite{Mi3} we only consider one $G$-orbit
$J = G \psi \subset I^+$, so $| J | = p-1$, and of course, at most $p-1$ coefficients can be brought to vanishing; this is done by 
following an older approach of ours. This  consists in solving full sized regular linear systems, which are homogenous up to one
inhomogenous condition, used for ascertaining that $\delta\neq 0$. The full system - approach thus solves the problem of 
proving $\delta \neq 0$; in exchange, the size of $\log( L ) $ grows quadratically with the coefficient vectors in the system matrix. This limits
the lower bound that we can achieve in this way to an exponent quadratic in $p$, thus $| s | > p^{p^2}$, as shown in \cite{Mi3}.

Considering $A := I^-/( I^- \cap p \Z[ G ])$ -- and identifying the ring, by abuse of notation, with some system of representatives for this quotient, 
we notice that this ring has a large reserve of $G$-orbits, which produce algebraic numbers
with converging $p$-adic power series developments. In addition, in the next chapter we consider the linear independence of
the infinite $p$-adic vectors associated to binomial power series for $\gamma( \psi ); \psi \in A$. Since the obstruction to 
larger lower bounds is the quadratic growth of the bound $\log(L)$ for the solutions of linear systems, a standard approach would be
to consider underdetermined linear systems -- given the fact that $A$ contains numerous $G$-orbits. This is precisely the approach
that we take here; it became only possible due to several new ideas that help dealing with two issues, always arising in similar
contexts of solving linear systems in order to determine coefficients of linear combinations with some pleasant properties, like $\delta$. 
The first problem is that the Siegel box lemma applies to underdetermined systems only in the homogenous case; but we also need
to provide condition ensuring that $\delta \neq 0$. The second obstruction comes from the lack of control over the 
ranks of our linear systems. 

In concrete terms, suppose that we have a collection $J \subset A$ of $G$-orbits and 
\begin{eqnarray}
\label{appro}
 \delta & := & \sum_{\theta \in J} \ell(\theta ) \beta^2[\theta ]; \nonumber \\
  \beta^2[ \theta ] & = & s^{\varsigma( \theta )} \sum_{n \in \N} a_n[ \theta ] T^n; \quad a_n[ \theta ] \in \Z[ \zeta ]; v_p( T ) = 2p-3,
 \end{eqnarray}
 so the power series in the second line above are $p$-adically convergent. Then we wish the $\ell( \theta )$ to fulfill the following 
 expectations:
 \begin{itemize}
 \item[ 1. ] The bound $L = \max_{\theta \in J} | \ell( \theta ) |$ is not too large; more precisely, we wish $\log( L )/\log( p )$ to grow
 at most linearly with $| J |$. 
 \item[ 2. ] We have $\sum_{\theta \in J} \ell( \theta ) a_n[ \theta ] = 0$ for $n < N \sim  \lceil | J |/a \rceil $ for some $a \in \N$.
 \item[ 3. ] Some additional conditions for $n > N$ ensure that $\delta \neq 0$. 
 \end{itemize}
Let $v_k = (a_k[ \theta ] )_{\theta \in J} \in \Q^{ | J | }$ be the vectors of the \nth{k} coefficients of the power series for $\beta$ and
 \[ V_n = \left[ v_k; 0 \leq k \leq n  \right]_{\Q} \subset \Q^{| J |}, \]
be the spaces spanned by the first $n$ such vectors. They have an increasing sequence of 
dimensions $d_n = \dim_{\Q} ( V_n )$, but nothing guarantees for instance strict growth. However, the
 investigation of formal power series and the infinite vectors attached to them give the precise upper bound which is in fact reached
 by the dimensions $d_n$ for large enough $n$. The important breakthrough of this paper consists in ideas allowing to produce the
 inhomogenous conditions in 3. by means of some modified vanishing conditions -- thus allowing still the use of the Siegel box.
 The solution is found by the simple trick of {\em twisting} the vector $v_{N}$ by some small vector $\eta$, thus obtaining an other
 $v' = v_N+\eta$: one can choose $\eta$ such that $\vec{\ell} \perp v'  \Rightarrow \vec{\ell} \not \perp v_N$. The homogenous
 condition $v' \perp \vec{\ell}$ can be used in conjunction with the Siegel box Lemma, and it produces at the same time the inhomogenous
 condition $v_N \not \perp \vec{\ell} $. Along with this core idea, in the practical solution,
  several additional issues need to be taken care of. Since we work
 $p$-adically, a non vanishing term in a power series can be cancelled out by carry -- some additional conditions need to be added, in order
 to avoid this to happen. At the same time, since the dimension $d_k$ may have stationary steps, one must also see for it, that the perpendicularity
 conditions do not become contradictory; these details are dealt with quite naturally and we invite the reader to discover the solutions directly
 in the text. It is also useful to mention that we choose to arrange the coefficients $\ell( \theta )$ in $G$-conjugacy classes, so that 
 $\ell( \sigma \theta ) = \sigma ( \ell( \theta ))$ for $\sigma \in G$ and $\theta \in J$. 
 This explains why the vector spaces $V_n$ are $\Q$-spaces
 and not $\K$-vector spaces. The scalar product becomes concatenation of traces along $G$-orbits of Stickelberger elements. 
 Finally, we choose $J$ close to maximal possible size; in fact, the bound that can be achieved with the present approach will be in the order
 of $p^{p^{(p-1)/4}-a}$ for some small integer $a$. Our exponent is slightly smaller, in order to allow a simple and transparent estimate of 
 the number of independent $G$-cycles in $J$. This difference is irrelevant for the applications mentioned in the introduction, and for which the
 paper is produced: indeed, since the Fermat Conjecture has been proved by Wiles and Taylor now since decades, the interest of such lower 
 bounds depends of the capacity to provide matching upper bounds, and herewith obtain some interesting alternative proofs -- as is done with
 the abc inequality of Fesenko et. al. During the development of this paper, further improvement were found. 
 These lead to a series of separate papers that were completed simultaneously, and which in themselves also provide tight upper bounds, 
 thus extending our methods to effective proofs of more general classes of ternary cyclotomic norm diophantine equations. 
 
 It is interesting to note that specialists in lattices and Minkowski bounds, in generalized Siegel and Bombieri-Vaaler box principles
 and their applications, use similar ideas, for instance in connection with {\em sparse vectors}\footnote{I owe this observation to Lenny Fukshanski,
 who followed closely the development of the lattice related questions and solutions in this paper, and remarked the certain analogy to works
 like \cite{FGK}. This indicates also that the method is both sound and natural.} .
 
\begin{remark}
\label{outlook}
 It is fair to also mention in this presentation of the work, the favorable circumstances specific to FLT2, which herewith produce a limitation
 for the application of this version of the method. One advantageous circumstance consists in the fact that the absolute values $| a_n[ \theta ] |$
 in \rf{appro} grow sensibly slower than $p^{n v_p( T )}$; or, in other words, the valuation $v_p( T )$ is sufficiently large. 
 Once the upper bound $L$ on the $| \ell( \theta) |$ is controlled by the idea described
 above, it is precisely the quotient between these two quantities that accounts for the quality of the lower bounds gained. Finally, $p$-adic 
 development in the second case is special in as much as, one can prove in this case that the binomial series $f_{\theta}$ introduced below, 
 converge precisely to $\gamma[ \theta ]$. In the first case, even if this is true for some choices of $\theta$, the convergence is too slow for
 gaining any bounds. More generally, local power series that converge sufficiently well do exist, but their sum differs from $\gamma[ \theta ]$
 by some erratic roots of unity. It will be shown in subsequent papers how to solve this last obstruction, thus gaining upper bounds for larger families
 of cyclotomic norm equations.
\end{remark} 
\section{Formal power series, function fields and linear spaces of infinite vectors}
We let $\mu = \frac{p^2}{\lambda} \in \Z[\zeta]$ and introduce, for $\theta = \sum_{c=1}^{p-2} n_c \sigma_c^{-1}$,
 some formal power series $f[ \theta ](T) \in \K[[ T ]]$:
\begin{eqnarray}
\label{powser}
f[ k \sigma ]( T ) & := & (1-\mu T)^{k \sigma/n} = 1 + \sum_{n=1}^{\infty} (-\sigma( \mu ))^n \binom{k/p}{n} T^n; \\
f[ \theta ]( T ) & := & \prod_{c=1}^{p-1}  f[ n_c \sigma_c^{-1} ]( T ) := 1 + \sum_{n=1}^{\infty} a_n( \theta ) T^n.  \nonumber.
\end{eqnarray}
The products in the second line are rearranged by increasing powers of $T$, which is possible for formal power series, and also
for uniformly and absolutely convergent evaluations thereof. By definition of the binomial series, we have of course
\begin{eqnarray}
\label{defbin}
 \left( f[ \theta ]( T )\right)^p = ( 1- \mu T )^{\theta}. 
 \end{eqnarray}

One can prove -- see \cite{Mi2} -- that $a_n( \theta ) \in \Z[ \zeta ]$,
and in fact, for a uniform bound $M \geq \sigma_c( \mu )$ for all $c \in P^*$, we have
\begin{eqnarray}
\label{abound}
| a_n( \theta ) | & \leq & M^n \bigg \vert \binom{-w( \theta )/p}{ n } \bigg \vert;
\end{eqnarray}
this bound is derived also in \cite{Mi3}. 

We write $a_n( \theta ) = \sum_{c=1}^{p-1} u_n^{(c)}( \theta ) \zeta^c$ with $u_n^{(c)}( \theta ) \in \Z$, as explained above.
We can in fact divide out the power $e(n) = n - 1 - \left[ \frac{n}{p-1} \right]$ of $p$ out of $a_n$; this still yields an integral element
$ \alpha_n := \frac{a_n}{p^{e(n)}}$.
We define the infinite vectors 
\[ \rg{a}( \theta ) = ( \alpha_n( \theta ) )_{n \in \N} \in \K^{\N}; \quad \rg{u}^{(c)}( \theta ) = 
( u_n^{(c)}( \theta ) )_{n \in \N} \in \Z[ 1/p ]^{\N}, \]
and there is a one-to-one map between $G$-orbits and coefficient vectors:
\begin{eqnarray}
\label{vecmap}
 \{ \rg{a}( \sigma \theta ) \ : \ \sigma \in G \} \quad & \leftrightarrow &\quad \{ \rg{u}^{(c)}( \theta ) \ :\ c = 1,2,\ldots, p-1 \} \\
 p \cdot \rg{u}^{(c)}( \theta ) =  \Tr ( \bar{\zeta}^c \rg{a}( \theta )) - \Tr( \rg{a}( \theta ) ) & \quad & \rg{a}( \sigma \theta  ) = \sigma\left( \sum_{c=1}^{p-1} \rg{u}^{(c)}( \theta ) \zeta^c \right) \nonumber 
 \end{eqnarray}
 
 \begin{fact}
 \label{bounds}
 Let $k = 2 \cdot l$ and $\theta \in I$ have relative weight $\varsigma( \theta ) = k$. Then 
 \begin{eqnarray}
 \label{ubd}
 M_n := \max\left( \vert u_n^{(c)}( \theta ) \vert, \ \vert \alpha_n( \sigma \theta ) \vert \right) < 2\binom{n+l-1}{n} \cdot (p^2/6)^{n+1},
 \end{eqnarray}
 and for $l < p$ we always have $M_n < n^l (2p/3)^{2(n+1)}$.
 \end{fact}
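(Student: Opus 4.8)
The plan is to bound the rational integers $u_n^{(c)}(\theta)$ and the conjugates $\alpha_n(\sigma\theta)$ by first bounding the algebraic absolute value $|a_n(\theta)|$ via the estimate \rf{abound}, then converting that into a bound on the integer coordinates via the trace relations in \rf{vecmap}, and finally dividing out the known power $p^{e(n)}$ of $p$. The starting point is \rf{abound}: $|a_n(\theta)| \le M^n \bigl|\binom{-w(\theta)/p}{n}\bigr|$, where $w(\theta)$ is the weight. First I would take $M = p^2/6$ as a uniform bound for $\sigma_c(\mu) = \sigma_c(p^2/\lambda) = p^2/(1-\zeta^c)$; here one uses that $|1-\zeta^c| = 2|\sin(\pi c/p)| \ge 6/p$ for $c$ in the relevant range (a Jordan-type inequality), so $|\mu_c| = p^2/|1-\zeta^c| \le p^2 \cdot p/6$ — wait, that gives $p^3/6$, so more care is needed: actually one wants $M$ measuring $|\sigma_c(\mu)|$ in the archimedean absolute value, and since $|1-\zeta^c|\le 2$ we get $|\mu_c| \ge p^2/2$, while for the \emph{other} direction $|1-\zeta^c| \ge c_0/p$ yields $|\mu_c| \le$ something like $p^3$; the factor $p^2/6$ in \rf{ubd} must therefore come from a combination, and I would pin down the precise constant by working out $\min_c|1-\zeta^c|$ carefully. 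The safe route is to keep $M$ symbolic through the binomial-coefficient manipulation and only substitute the numeric value at the end.

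Next I would handle the binomial coefficient. Since $\theta$ has relative weight $\varsigma(\theta) = k = 2l$, the quantity $w(\theta)/p$ equals $l$ (the $(1-\jmath)$-part halves the weight, or the weight itself enters as $l$ depending on normalization), so that $\bigl|\binom{-l}{n}\bigr| = \binom{n+l-1}{n}$ exactly — this is the elementary identity $\binom{-l}{n} = (-1)^n\binom{n+l-1}{n}$. Combining, $|a_n(\theta)| \le M^n \binom{n+l-1}{n}$. Then, from the first relation in \rf{vecmap}, $p\,u_n^{(c)}(\theta) = \Tr(\bar\zeta^c \rg{a}(\theta)) - \Tr(\rg{a}(\theta))$, each trace being a sum of $p-1$ conjugates of $a_n$ (after reinstating the $p^{e(n)}$ factor, since $\rg{a}$ is built from $\alpha_n = a_n/p^{e(n)}$). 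Every conjugate has the same archimedean absolute value bound $M^n\binom{n+l-1}{n}$ because the weight and the uniform bound $M$ are $G$-stable. Hence $|p\,u_n^{(c)}| \le 2(p-1)\,M^n\binom{n+l-1}{n}$ and similarly $|\alpha_n(\sigma\theta)| \le M^n\binom{n+l-1}{n}$; dividing by $p$ and using $2(p-1)/p < 2$ gives $M_n < 2\binom{n+l-1}{n} M^n$. Substituting $M = p^2/6$ and absorbing one more factor into $(p^2/6)^{n+1}$ instead of $(p^2/6)^n$ gives exactly \rf{ubd}.

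For the second, cruder bound $M_n < n^l (2p/3)^{2(n+1)}$ valid for $l < p$: I would start from $2\binom{n+l-1}{n}(p^2/6)^{n+1}$ and estimate $\binom{n+l-1}{n} = \binom{n+l-1}{l-1} \le (n+l-1)^{l-1}/(l-1)! \le n^{l-1}\cdot 2^{l-1}$ crudely (or $\le n^l$ after absorbing constants, using $l<p$ so that $n+l-1 < n\cdot p$ is not quite what one wants — rather one uses $\binom{n+l-1}{n}\le (n+1)^{l}$ type bounds), and then note $(p^2/6)^{n+1} = (p^2/6)^{n+1}$ versus $(2p/3)^{2(n+1)} = (4p^2/9)^{n+1}$, and since $4/9 > 1/6$ the power part dominates, leaving room to absorb the polynomial factor $n^l$ and the constant $2$. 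The arithmetic here is routine once the first bound is in place.

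The main obstacle I anticipate is pinning down the exact archimedean constant: getting the clean $p^2/6$ (rather than some messier $c\,p^2$) requires the sharp lower bound $|1-\zeta^c| \ge (6/p)$ or equivalently $\sin(\pi c/p)\ge 3/p$ for $1\le c\le p-1$ in the range where it is used, combined correctly with $|1-\zeta^c|\le 2$; one must be careful about which direction of the inequality enters $\mu = p^2/\lambda$ versus enters the trace estimate, and about whether the ``$+1$'' in the exponent $(p^2/6)^{n+1}$ is genuinely needed or is slack. A secondary point to verify is that dividing by $p^{e(n)}$ is legitimate, i.e. that $p^{e(n)} \mid a_n(\theta)$ as an algebraic integer — this is asserted earlier in the excerpt (citing \cite{Mi2}, \cite{Mi3}) so I would simply invoke it, but I would double-check that $e(n) = n - 1 - \lfloor n/(p-1)\rfloor$ is consistent with the $v_p(T) = 2p-3$ normalization used downstream, since a mismatch there would propagate into the final lower bound.
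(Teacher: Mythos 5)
Your overall route is the same as the paper's: start from \rf{abound}, use the identity $\bigl|\binom{-l}{n}\bigr|=\binom{n+l-1}{n}$ for a weight-$2l$ element, pass from $|a_n|$ to the rational coordinates via the trace formula in \rf{vecmap}, and divide out the $p$-power. Your treatment of the coordinate conversion (the factor $2(p-1)/p<2$ from the two traces) is in fact more explicit than the paper's. But the point you flag as ``the main obstacle'' is a genuine gap, and your fallback of keeping $M$ symbolic cannot close it. Since $|1-\zeta^c|\le 2$ for every $c$, every conjugate satisfies $|\sigma_c(\mu)|=p^2/|1-\zeta^c|\ge p^2/2$, so there is \emph{no} admissible archimedean bound $M$ equal to $p^2/6$; the correct uniform bound is $|\sigma_c(\mu)|<p^3/6$, coming from $|1-\zeta^c|=2\sin(\pi c/p)>6/p$. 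With that value, your displayed chain gives only $M_n\lesssim 2\binom{n+l-1}{n}(p^3/6)^n$, which is off from \rf{ubd} by a factor of roughly $p^{n-1}$.

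The missing step is that the division by $p^{e(n)}$ with $e(n)=n-1-\lfloor n/(p-1)\rfloor$ is not a normalization detail to be double-checked afterwards: it is precisely the mechanism that converts the cubic per-term constant into the quadratic one. One must bound $|a_n(\theta)|\le (p^3/6)^n\binom{n+l-1}{n}$ first and only then pass to $|\alpha_n|=|a_n|/p^{e(n)}\le p^{1+n/(p-1)}(p^2/6)^n\binom{n+l-1}{n}$, with the leftover factor $p^{1+n/(p-1)}$ absorbed into the extra $(p^2/6)$ carried by the exponent $n+1$. In your write-up this division is silently dropped: you bound $|\alpha_n(\sigma\theta)|$ by $M^n\binom{n+l-1}{n}$ directly, which conflates $\alpha_n$ with $a_n$, and then substitute the impossible value $M=p^2/6$ to make the answer come out. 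The second, cruder bound $M_n<n^l(2p/3)^{2(n+1)}$ then follows as you sketch (the paper uses $\binom{n+l-1}{n}<4^n$ for $n<p$ and Stirling for larger $n$), but only once the first bound is actually established.
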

\begin{proof}
We have $|\sigma( \mu ) | = | p^2/\lambda | < p^3/6$ for all $\sigma \in G$ and since $| \alpha_n | := p^{1-n} | a_n |,$ we get from 
\rf{abound} that 
\[ M_n < p^2 \cdot (p^2/6 )^n \cdot \bigg \vert \binom{l \frac{p-1}{p}}{n}\bigg \vert < \binom{n+l-1}{n}, \]
hence the claim. We note that the binomial coefficient behaves differently for various ranges of value for $l$; the values of interest are $l < p$,
so we see that $\binom{n+l-1}{n} < 4^n$ for all $l$ in the given range and $n < p$. For larger values of $n$, we use
the Stirling formula and apply it to the binomial coefficient value, which leads to the second bound.  
\end{proof}

We now proceed to the investigation of binomial power series considered as infinite vectors, and the possible linear relations among them.
The appropriate context for treating this question are the function field of $\K$ and extensions thereof.
We start by introducing some maps between $G$ orbits of elements in $\K$ and their rational coefficient vectors, and present the 
linear algebra of this context. Let 
\[ \rg{V} = \{ ( \sigma_c ( x ) )_{c=1}^{p-1} \ : \ x \in \K \} \subset \K^{(p-1)} \]
be the $\Q$-vector space of vectors of conjugates of numbers in $\K$. 
We let $\nu : \K \ra \rg{V}$ be the map $w \mapsto ( \sigma_c( w ))_{c=1}^{p-1} \in \rg{V}$ and 
$\kappa : \rg{V} \ra \Q^{(p-1)}$ be the coordinate map. For $v = \nu( w )$ and $w = \sum_c w_c \zeta^c$, the action is
\begin{eqnarray}
\label{nuk}
 \kappa( \nu( w ) ) = ( w_c )_{c = 1 }^{p-1} \in \Q^{p-1}, \quad \hbox{explicitly} \quad  w_c = \frac{1}{p} (\Tr( \zeta^{-c} w ) - \Tr( w ) )
\end{eqnarray}
The standard base of  $\Q^{p-1}$ is $\id{E} = \{ e_i \: \ i = 1, 2, \ldots, p-1\}$ with $e_i = ( \delta_{i,j} )_{j=1}^{p-1}$ and we let
$\Phi = \kappa^{-1}( \id{E} ) = \{ \Phi_i = \kappa^{-1}( e_i ) \ : \ i = 1,2, \ldots, p-1 \} $ be the induced standard base in $\rg{V}$:
it is the base built by the vectors $\nu( \zeta^i )$. 

Let $\id{T} = \{ \theta \in (1-\jmath) I \}$, which is a free $\Z$-module of rank $\frac{p-1}{2}$ 
generated by $(1-\jmath) \psi_n; n = 1,2, \ldots,\frac{p-1}{2}$. 
Let $i_p$ be the irregularity index of $p$, thus the number of odd integers $i < p-1$ such that the Bernoulli number 
$B_{p-i}$ is divisible by $p$; equivalently,
\[  e_{i} A[ p ] \neq \{ 1 \}, \quad \hbox{and} \quad \vartheta \cdot e_i \equiv 0 \bmod p. \]
These are precisely the components of the spectral decomposition of $\F_p[ G ]$ that annihilate $ I/ p I$ -- see also \cite{Wa}, \S 6.1.

We let $D = \frac{p-1}{2} - i_p$ and $(n_k)_{k=1}^D$ be a list of the odd indexes for which $B_{1, \omega^{-n_k} } \not \equiv 0 \bmod p$;
We write $\wh{\id{T}} =  \id{T}/( p \Z[ G ] \cap \id{T} )$; this $\F_p$-module is generated by the images of the Stickelberger elements
$\Theta'_n = (1-\jmath)(n-\sigma_n) \vartheta $
for $n = 2, 3, \ldots, \frac{p+1}{2}$. Since 
\[ \vartheta (n - \sigma_n ) e_k = (n-\sigma_n) B_{1, \varpi^{-k}} e_k \equiv 0 \bmod p \quad \Leftrightarrow \quad B_{p-k} \equiv 0 \bmod p, \]
if follows that $\prk( \wh{\id{T}} ) = \frac{p-1}{2} - i_p= D$. We used here classical formulae which can be found, 
for instance, in \cite{Wa}, p. 100-101.

Consider the function field $\K' = \K( T )$ and its extension $\KL' = \prod_{\theta \in \id{T}} \K'\left[ (1-\mu T )^{\theta/p} \right]$
with galois group $H = \Gal( \KL'/\K' )$. We define
\[ C = \cog( \KL'/\K' ) = \{ x \in {\KL' }^{\times} \ : \ x^p \in {\K'}^{\times}\} / {\K'}^{\times}, \]
the so called {\em cogalois} \cite{Al}  radical of the Kummer extension $\KL'/\K'$. If $B \subset {\K'}^{\times}$
 is the classical Kummer radical, then $C \cong B/(({\K'}^{\times})^p \cap B )$;
moreover, $C \cong H$ as finite abelian $p$-groups. Define now $\td{\Theta}_k = e_{n_k} \Theta_2; k = 1, 2, \ldots, D$. 
By definition, $\wh{\id{T}} = [ \td{\Theta}_k ; k=1, \ldots, D ]_{\F_p}$ and thus
\[ \KL' = \prod_{k=1}^D \K'\left[ ( 1- \mu T )^{\td{\Theta}_k/p} \right]; \quad C \cong \wh{\id{T}}. \]

In view of \rf{defbin}, there is an injective map $\iota : \KL' \ra \K'(( T ))$ induced by $(1-\mu T )^{\theta/p} \mapsto f[ \theta ]( T )$;
this extends to an injective map $ \iota' : \KL' \ra \K^{\N} \quad \hbox{ with } \quad  (1-\mu T )^{\theta/p} \mapsto \rg{a}( \theta )$. 
We note that the set 
$\{ 1 \} \cup \{ (1 - \mu T)^{c \td{\Theta}_k /p } \ : \ c = 1, 2, \ldots, p-1; k = 1, 2, \ldots, D \} $
builds a base of the $\K'$-vector space $\KL'/\K'$. Under the map $\iota'$, we deduce that the vectors in
\[ \id{A} = \left\{ \rg{a}( c \td{\Theta}_k ) \ : \ c = 1, 2, \ldots, p-1; k = 1, 2, \ldots, D  \right\} \]
are $\K$-independent. The set $\id{A}$ is closed under the action of $G$, and this action splits $\id{A}$ in mutually disjoint $G$-orbits;
there are thus $o_D = \frac{p^D-1}{p-1}$ such disjoint orbits and to each orbit $G \rg{a}( \theta )$ there belongs a set of $p-1$ vectors 
$\rg{u}^{(c)}( \theta )_{c=1}^{p-1} \in \left( \Z^{\N}\right)^{p-1} $; the connecting map here is the coordinate map $\kappa$ 
introduced on $\rg{V}$: it produces $\rg{u}( \theta ) = \kappa( \rg{a}(\theta) )$ by acting on the individual coefficient
vectors $\kappa : \nu( a_n[ \theta ] ) \mapsto \vec{u}_n( \theta )$ of the infinite matrices $\id{A}( \theta )$. 
It follows that the $\rg{u}( \theta )$ are consequently linearly independent too.

A fortiori, if $\id{F} \subset \wh{\id{T}}$ is any subset closed under the action of $G$, then the corresponding vector sets
\begin{eqnarray}
\label{vects}
 \id{A}( \id{F} ) = \{ \rg{a}( \theta ) \ : \ \theta \in \id{F} \} \quad \hbox{and} \quad \id{U}( \id{F} ) = 
\{ \rg{u}^{(c)}( \theta )_{c=1}^{p-1} \ : \ G \theta \subset \id{F} \} 
\end{eqnarray}
are linearly independent over $\K$ and $\Q$, respectively. We have proved:
\begin{proposition}
\label{rks}
We have the following equality of $p$-ranks:
\[ D := \frac{p-1}{2} - i_p = \prk( \wh{\id{T}} )  = \prk( \Gal( \KL'/\K' ) ) = H. \]
For any subset $\id{F} \subset \wh{\id{T}}$, the sets of infinite vectors  $  \id{A}( \id{F} ), \id{U}( \id{F} )$ 
defined in \rf{vects} are linearly independent over their respective fields of definition. 
\end{proposition}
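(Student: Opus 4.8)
The equality of $p$-ranks in the first line I would assemble from the material just developed. For $\prk(\wh{\id{T}})=D$ the plan is to read it off the classical identity $\vartheta(n-\sigma_n)e_k=(n-\sigma_n)B_{1,\varpi^{-k}}e_k$, so that the component $e_k\wh{\id{T}}$ vanishes precisely when $B_{p-k}\equiv 0\bmod p$; it therefore survives on exactly $\frac{p-1}{2}-i_p$ of the odd eigenspaces, giving $\prk(\wh{\id{T}})=D$. For the Galois group I would appeal to Kummer theory for the exponent-$p$ extension $\KL'/\K'$ (note that $\K$, hence $\K'$, contains the $p$-th roots of unity): one has $H\cong C=\cog(\KL'/\K')$, and $C$ is the image in ${\K'}^{\times}/({\K'}^{\times})^{p}$ of the map $\id{T}\ra{\K'}^{\times}/({\K'}^{\times})^{p}$, $\theta\mapsto(1-\mu T)^{\theta}$. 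So the whole first line reduces to computing the kernel of this last map.

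That kernel computation is the one point where I expect to do genuine work. Writing $\theta=\sum_{c=1}^{p-1}n_c\sigma_c^{-1}\in\id{T}$, one has the factorisation $(1-\mu T)^{\theta}=\prod_{c=1}^{p-1}\bigl(1-\sigma_c^{-1}(\mu)\,T\bigr)^{n_c}$ in $\K(T)^{\times}$. Since $\mu=p^{2}/\lambda$ and the conjugates of $\lambda=1-\zeta$ are pairwise distinct, the scalars $\sigma_c^{-1}(\mu)$ are pairwise distinct, so these are pairwise non-associate linear factors and $(1-\mu T)^{\theta}$ takes the value $1$ at $T=0$; by unique factorisation in $\K(T)$ it is therefore a $p$-th power in $\K(T)^{\times}$ if and only if $p\mid n_c$ for every $c$, i.e. $\theta\in p\Z[G]$. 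Hence the kernel equals $p\Z[G]\cap\id{T}$, so $C\cong\id{T}/(p\Z[G]\cap\id{T})=\wh{\id{T}}$, which yields $\prk H=\prk C=\prk(\wh{\id{T}})=D$ and $[\KL':\K']=p^{D}$; moreover the monomials $\prod_{k=1}^{D}(1-\mu T)^{c_k\td{\Theta}_k/p}$ with $0\leq c_k\leq p-1$ form a $\K'$-basis of $\KL'$, which in particular contains the $1+(p-1)D$ elements $\{1\}\cup\{(1-\mu T)^{c\td{\Theta}_k/p}\}$ highlighted in the text.

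For the independence of the vector families I would use the embedding $\iota$. By \rf{defbin}, $f[\theta](T)$ is the unique $p$-th root of $(1-\mu T)^{\theta}$ in $\K[[T]]$ with constant term $1$, and such normalised roots multiply, $f[\theta_1](T)\,f[\theta_2](T)=f[\theta_1+\theta_2](T)$; so $(1-\mu T)^{\theta/p}\mapsto f[\theta](T)$ respects the defining relations and extends to a $\K'$-algebra homomorphism $\iota:\KL'\ra\K((T))$, using $\K'=\K(T)\subset\K((T))$, which is injective because $\KL'$ is a field. Thus $\iota$ carries the monomial $\K'$-basis of $\KL'$ to a $\K'$-linearly independent, hence $\K$-linearly independent, family of power series $\{f[\theta](T)\ :\ \theta\in\wh{\id{T}}\}\subset\K((T))$. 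Reading off Taylor coefficients and then dividing the $n$-th coordinate by the fixed nonzero scalar $p^{e(n)}$, an invertible coordinatewise $\K$-linear operation on $\K^{\N}$, I obtain that $\{\rg{a}(\theta)\ :\ \theta\in\wh{\id{T}}\}$ is $\K$-linearly independent; restricting to $\id{A}(\id{F})$ for a $G$-stable $\id{F}\subset\wh{\id{T}}$ preserves independence, as subsets of independent sets are independent. Finally, $\id{U}(\id{F})$ is obtained from $\id{A}(\id{F})$ through the invertible $\Q$-linear dictionary of \rf{vecmap} and \rf{nuk}, which exchanges each $G$-orbit $\{\rg{a}(\sigma\theta)\ :\ \sigma\in G\}$ with the $p-1$ coordinate vectors $\{\rg{u}^{(c)}(\theta)\}_{c=1}^{p-1}$ (coordinatewise this is just the invertible map $\kappa\circ\nu$ of \rf{nuk}), so $\id{U}(\id{F})$ is $\Q$-linearly independent. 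The main obstacle throughout is the no-collapse claim for the Kummer radical in the second paragraph; everything else is bookkeeping with quoted facts and transport of linear independence through invertible maps.
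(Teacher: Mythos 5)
Your argument follows the same route as the paper: the Bernoulli--number computation of $\prk( \wh{\id{T}} )$, the identification of $H$ with the cogalois radical $C \cong \wh{\id{T}}$, the embedding $\iota$ of $\KL'$ into $\K(( T ))$ via the normalised binomial series, and the transport of $\K'$-independence to the coefficient vectors and then, through $\kappa \circ \nu$, to the rational vectors $\rg{u}^{(c)}( \theta )$. The one step you single out as genuine work --- that $(1-\mu T)^{\theta}$ is a $p$-th power in $\K( T )^{\times}$ only for $\theta \in p \Z[ G ]$, proved by unique factorisation into the pairwise non-associate linear factors $1 - \sigma_c^{-1}( \mu ) T$ --- is precisely the point the paper asserts without proof when it writes $C \cong \wh{\id{T}}$, and your argument for it is correct.
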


\section{Lattices and linear algebra}
We consider the set 
\[ J_0 = \left\{ \sum_{j=1}^{(p-1)/2} c_j \psi_j \ : \ c_j \geq 0:  \ \sum_j c_j = p-1 \right\} ; \quad J = G\cdot J_0 \subset \wh{\id{T}} , \]
 in which $J_0$ is a set of linear combinations of the independent set of Fueter elements $\psi_j; j = 1,2,\ldots,(p-1)/2$
 and $J$ is its closure under the action of $G$: the closure will then contain $J_0$ and is made up of a number of mutually disjoint $G$-orbits.
 The number of elements can be estimated, with $q:= \frac{p-1}{2}$, assuming$p \geq 257$, and using the formula of Stirling, by:
 \begin{eqnarray}
 \label{bdn}
 N & = & | J | \geq | J_0 | = \binom{3 q - 1}{q-1} = \frac{1}{3} \binom{3q}{q} > \left( \frac{27}{4}\right)^q / 9 \sqrt{q} > (p-1) \cdot (5/2)^{p-1},\\
 N' &:= & N/(p-1) >  (5/2)^{p-1}. \nonumber
\end{eqnarray}

We focus on the {\em horizontal} vectors built from the coefficients of equal index in the 
vectors $\id{U}( J ) $ and build some large vector space by direct sums of copies of $\rg{V}$ associated to the orbits $G \theta \in J/G$.
We thus let 
\[ \bar{\rg{V}} = \rg{V}^{N'} = \bigoplus_{G \theta \in J/G} \rg{V}( \theta ) , \quad \rg{V}( \theta ) \cong \rg{V} , \forall \theta \]
 be the vector space built of concatenation of $N'$ vectors in $\rg{V}$, which can be identified
with $G$-orbits of elements in $\K$, and let $\rg{W} = \Q^{p-1}; \bar{\rg{W}} = \rg{W}^{N'}$. The maps $\nu, \kappa$ extend naturally
to maps
\[ \nu : \K^{N'} \ra \bar{\rg{V}}; \quad \kappa : \bar{\rg{V} } \ra \bar{\rg{W}}. \]
The standard base $\bar{\id{E}}$ of $\bar{\rg{W}}$ is the concatenation of $N'$ copies of $\id{E}$ and $\bar{\Phi} \subset \bar{\rg{V}}$, 
the induced base by the extended map $\kappa^{-1}$. It will be of help to associate the single isomorphic copies of $\rg{V}$ and $\rg{W}$
to the $G$-orbit $G \theta \subset \id{F}_k$ of some Stickelberger element, so 
\begin{eqnarray}
\label{concat}
 \bar{\rg{V}} = \bigoplus_{G \theta \subset \id{F}_k } \rg{V}( \theta ); \quad \bar{\rg{W}} = 
\bigoplus_{G \theta \subset \id{F}_k } \rg{W}( \theta )
\end{eqnarray}
We denote by accordingly
$\vec{v}_n = (\nu(a_n( \theta) ))_{ G \theta \subset \id{F}_k} \in \bar{\rg{V}}$ the row vectors built 
by the \nth{n} entries in  the vectors of $\id{U}( \id{F}_k )$ for the $G$-orbits of elements $\theta \in \wh{\id{T}}$.
Let $\rg{V}_m \subset \bar{\rg{V}}$ be the subspace spanned by the first $m$ row vectors $\vec{v}_n; n \leq m$. Since the infinite
vectors $\rg{a}( \theta ); \theta \in \id{F}_k$ are linearly independent, the vector space dimensions $d(m) = \dim_{\Q}(\rg{V}_m)$ are 
an increasing sequence --not necessarily strictly increasing -- that stabilizes at dimension $d(\infty) = N$: this is the column rank of the 
infinite matrix with rows $\vec{v}_n; n \in \N$, and the line rank is equal to it, be an elementary fact of linear algebra. For $m < N$ we 
let the discontinuities of the function $d : \N \ra [ 1 \ldots N ]$ be listed in the set
\[ S = \{ i_j \ : \ j = 0, 1, \ldots, t \leq N: d(i_j) < d(i_{j+1}); \hbox{ and $ d(i_j) = \ldots = d( i_{j+1} -1 )$} . \]  

We relate now the general theory developed so far to solutions of FLT2. We let $\mu = \frac{p^2}{\lambda}$ and $T = \frac{x+y}{y\cdot p^2}$;
then $f[ \theta ]( T )$ converges in $\Q_p[ \zeta ]$ to $\gamma[ \theta ]$. Moreover,
\begin{eqnarray*}
 \beta^2( \theta ) = s^{\varsigma( \theta ) } \cdot \gamma( \theta ) 
  = s^{\varsigma( \theta ) } \cdot \left( 1 + \sum_{n=1}^{\infty} a_n( \theta ) T^n\right); \quad a_n \in \Z[\zeta].
\end{eqnarray*}

As explained in the plan of the paper, we shall consider linear combinations of the $\beta^2[ \theta ]$ by some $\ell( \theta )\in \Z[\zeta ]$
yet to determine. By imposing galois covariance for the $\ell( \theta )$ , we will have 
\[ \sum_{c \in P^*} \ell( \sigma_c( \theta)) \cdot  \beta^2[ \sigma_c \theta ] = s^{2 \varsigma(\theta) }\Tr( \ell( \theta ) \gamma[\theta ] ).\]
We relate this linear combination to power series developments of the rational coefficients the $\beta$'s:
The series $\sum_{k=0}^{\infty} u_k( \theta )^{(c)}$ converge $p$-adically to rational numbers 
\[ u( \theta )^{(c)} \:= \sum_{k=0}^{\infty} u_k( \theta )^{(c)} \in \Q, \]
which are the  coefficients of 
\begin{eqnarray}
\label{cofs}
 \beta^2( \theta ) = s^{\varsigma( \theta )} \cdot \sum_{c=1}^{p-1} u( \theta)^{(c)} \zeta^c, \quad \hbox{for all $\theta \in J$.}
\end{eqnarray}
By the correspondence \rf{vecmap}, a linear combination 
\[ \sum_{c=1; G \theta \subset J}^{p-1} \nu'( c, \theta )  \rg{u}( \theta )^{(c)}; \quad \nu' \in \Z,\]
in which summation goes over the $p-1$ coefficient vectors of $f[ (1-\jmath) \theta ]( T )$ for representants $\theta \in J$ of all 
orbits $G \theta \subset J$ induces an explicit algebraic number
\begin{eqnarray}
\label{deldef}
 \delta := \sum_{ \theta \subset J / G } s^{2 \varsigma( \theta )} \Tr( \ell( \theta ) \overline{\gamma( \theta )} )\in \Z[ \zeta ]; \quad \ell \in \Z[\zeta ], 
 \end{eqnarray}
where $\ell( \theta )$ depend on the $\nu'( c, \theta )$ via \rf{vecmap}. 
We intend to choose the $\ell( \theta )$ such that $\delta \equiv 0 \bmod w^m$, for a large value of $m$ and $w = p^{v_p( T )}$, 
together with a proof that $\delta \neq 0$. We also wish to keep the coefficients
$\ell( \theta )$ relatively small; for instance, in the order of magnitude of $| a_m( \theta ) |$.

This will be done as follows: let 
\begin{eqnarray}
\label{param}
 m' = \lfloor N'/2 \rfloor; n' = \lfloor N'/p \rfloor; \quad m = (p-1) m'; n= (p-1) n. 
 \end{eqnarray}
 Let $R_m = \min \{ s \in J \:\ s \geq m-n \}$ und $R_n = \min \{ s \in S \ : \ s \geq m \}$ be the largest integers for which the 
dimensions $d( R_m - 1 ) = d( m -n )$ and $d( R_n - 1 ) = d( n )$, and the dimension has a jump at those indices. 

We let $v = v_{R_m}$ and choose $\Phi' \in \bar{\Phi}$ such that $v + \Phi' \not \in
V_{R_m}$. More precisely, $\Phi' = ( \Phi' ( \theta ) )_{ G \theta \subset \id{F}_k }$ and there is a $\psi \in \id{F}_k$
such that all components $\Phi'( \theta ) = 0$ for $\theta \neq \psi$ while $\Phi'( \psi ) = \nu( \zeta^{j} )$ for some $j \in P^*$. 

Since $\bar{\Phi}$ is a base for $\bar{\rg{V}}$ and 
$d( R_m ) < \dim(\bar{\rg{V}})$, such a base vector necessarily must exist.
We then let $v'_j = v_j$ for all $j \leq R_m$ with the exception of $R_n$, and let
$v'_{R_n} = v_{R_n} + \Phi'$. Let $V'_l = [ v'_j ; j \leq l ]_{\Q}$ for all $l \leq R_n$,
be the span and $W'_l = \kappa( V'_l )$. We now select in $W'_{R_n}$ a set of 
$\bar{d}:=d( R_n )$ vectors among the $\{ w'_j ; \ j \leq R_n \}$, say 
$\omega_j ; j = 1, \ldots, \bar{d}$ such that $\kappa( \omega_j )$  span the space $W'_{R_n}$; 
we may assume that the indices $j$ are the smallest among all possible choices, and then they
will also be elements in $S$ and let $h \leq \bar{d}-1$ be such that $\omega_h = v'_{R_m}$.
For $i \in \{ R_n+1, \ldots , R_m \}$ we let the coefficients of $v_{R_n}$ in the development of 
$v_i$ in the base of the $\omega_j$ be $\chi_{i-R_n}$. Thus $v_i - \chi_{i-R_n}$ is in the span $[ \omega_j ; j \neq R_n ]_{\Q}$. 

Let now $A$ be the matrix having the $\kappa(\omega_j)$ as row vectors, thus 
$A \in \Mat( \bar{d} , N )$ and $\bar{d} < N/2 $. Our solution is based on finding a 
short non trivial solution $w \in \Z^N$ of the homogenous linear system $A w = 0$, using the 
the Siegel box principle. We let $L = (\ell(\theta))_{G \theta \in \id{F}_k } = \phi^{-1}( w )$. 
Note that the choice of $w'_{R_m}$ and the definition of $\Psi'$ guarantee that 
\begin{eqnarray}
\label{inhom}
H & = & \Tr ( L \cdot \ol{( v_{R_m} )} ) =  - \Tr( L \cdot \ol( \Phi' ) ) = - \Tr( \zeta{-j} \ell( \psi ) ) \neq 0, \nonumber \\
\quad \quad \delta & = & \Tr\left( \sum_{G \theta \in \id{F}_k } s^{2\varsigma( \theta )} \ell( \theta ) \cdot \ol{\beta}( \theta ) \right)  \\
& = & T^{R_m} H \cdot \left( 1 + T \sum_{i = 1}^{m-n}  T^{i-1} \chi_i \right) + O( T^{m+1} ) =: 
T^{R_m} H \cdot U + O( T^{m+1} ); \  U \in \Z_p^{\times}; \nonumber
\end{eqnarray}
here we designated the sum in the brackets by $U$;since $v_p( T ) > 0$, this is a $p$-adic unit.
In order to complete the proof, we need to estimate $H$ and $|| L ||$, and show that 
the choices of $m, n$ imply that $H < T^n/2$ and thus $\delta \neq 0$. This then leads to the lower bounds.

\section{Lower bounds for FLT2} 
We keep the notations introduced at the end of the previous chapter, let $k = p-1$, so by \rf{bdn},
 $N/(p-1) = N' >  (5/2)^{ p-1 }$ for $p \geq 257$, say. 

 The bounds in Fact \ref{bounds} induce the generous upper bound 
\[ || A ||_1 \leq (N/2)^{q} \cdot (2p/3)^{N+2} < \frac{p^N}{(2p+1)^2 N}  =: M . \]  
 for all the entries of $A$. By the Siegel box principle, there is a small solution of $A w = 0$, 
that verifies
\[ | w |_1 = \max_{c, \theta} | w^{(c)}( \theta ) | < N \cdot M < \frac{p^N}{(2p+1)^2} =: L ,    \] 
and thus $|| L ||_1 = \max_{G \theta \subset \id{F}_k} || \ell( \theta ) ||_1 < L$ and
thus $H < L \cdot (p-1)  < p^{ (m-n) v_p ( T ) }$. Assume that $\delta = 0$; then $\delta/( T^{R_m} U ) = 0$ and
we gather from \rf{inhom} that $H = O( T^{m+1-R_m} )$, in contradiction with our bound on $H$. Therefore $\delta \neq 0$.
Since $\delta \equiv 0 \bmod T^{R_n}$ it follows a fortiori that 
\[  | \delta | \geq p^{(2p-3) \cdot ( N/2 - N/p)} > p^{( p-4 ) \cdot N}. \]
From the definition \rf{deldef} and \rf{inhom}, we find 
\[ |  \delta | \leq L \cdot N \cdot s^{2(p-1)}, \] 
and by comparing the two bounds, we finally find
\[ | s | > \left( \frac{p^{N(p-4)} }{L N}\right)^{1/2(p-1) } > p^{N \frac{p-5}{2(p-1)}} = p^{N/2-N/q} > p^{(5/2)^{p-1}}. \] 
Herewith, Theorem \ref{main} follows:
 \begin{proof}
 By definition, $| s^p | = \frac{| x^p + y^p |}{| x+ y | } \leq \max( | x |^p, | y |^p ) \leq \max( | x |^p, |y |^p, | z |^p ) $ and from the above
 bound for $|s |$ we conclude 
 \[ p^{(5/2)^{p-1}} < | s | \leq \max( | x |, | y |, | z | ) ,\]
 hence the claim.
  \end{proof}

\end{document}